\DeclareMathAlphabet{\mathcal}{OMS}{cmsy}{m}{n}
\newtheorem{theorem}{Theorem}[section]
\newtheorem{lemma}[theorem]{Lemma}
\newtheorem{corollary}[theorem]{Corollary}
\newtheorem{proposition}[theorem]{Proposition}
\theoremstyle{definition}
\newtheorem{example}[theorem]{Example}
\newtheorem{remark}[theorem]{Remark}
\numberwithin{equation}{theorem}
\def\ge{\geqslant}
\def\le{\leqslant}
\def\to{\longrightarrow}
\def\mapsto{\longmapsto}
\def\into{\lhook\joinrel\longrightarrow}
\def\onto{\relbar\joinrel\twoheadrightarrow}
\def\codim{\operatorname{codim}}
\def\Proj{\operatorname{Proj}}
\def\Sing{\operatorname{Sing}}
\def\Spec{\operatorname{Spec}}
\def\Sym{\operatorname{Sym}}
\def\fraka{\mathfrak{a}}
\def\frakm{\mathfrak{m}}
\def\frakp{\mathfrak{p}}
\def\FF{\mathbb{F}}
\def\NN{\mathbb{N}}
\def\PP{\mathbb{P}}
\def\ZZ{\mathbb{Z}}
\def\calI{\mathcal{I}}
\def\calN{\mathcal{N}}
\def\calO{\mathcal{O}}
\begin{document}
\title[An asymptotic vanishing theorem for the cohomology of thickenings]{An asymptotic vanishing theorem for \\ the cohomology of thickenings}

\author[Bhatt]{Bhargav Bhatt}
\address{Department of Mathematics, University of Michigan, 530 Church Street, Ann Arbor, MI~48109, USA}
\email{bhargav.bhatt@gmail.com}

\author[Blickle]{Manuel Blickle}
\address{Institut f\"ur Mathematik, Fachbereich 08, Johannes Gutenberg-Universit\"at Mainz,
\newline 55099~Mainz, Germany}
\email{blicklem@uni-mainz.de}

\author[Lyubeznik]{Gennady Lyubeznik}
\address{Department of Mathematics, University of Minnesota, 206 Church~St., Minneapolis,\newline MN~55455, USA}
\email{gennady@math.umn.edu}

\author[Singh]{Anurag K. Singh}
\address{Department of Mathematics, University of Utah, 155 S 1400 E, Salt Lake City,\newline UT~84112, USA}
\email{singh@math.utah.edu}

\author[Zhang]{Wenliang Zhang}
\address{Department of Mathematics, Statistics, and Computer Science, University of Illinois at Chicago, 851 S.~Morgan~St., Chicago, IL 60607, USA}
\email{wlzhang@uic.edu}

\thanks{B.B.~was supported by NSF grant DMS~1801689, a Packard fellowship, and the Simons Foundation grant~622511, M.B.~by DFG grant SFB/TRR45, G.L.~by NSF grant DMS~1800355, A.K.S.~by NSF grant DMS~1801285, and W.Z.~by NSF grant DMS~1752081. The authors are grateful to the Institute for Advanced Study for support through the Summer Collaborators Program, to the American Institute of Mathematics for support through the SQuaREs Program, and to the referee for several helpful comments.}

\begin{abstract}
Let $X$ be a closed equidimensional local complete intersection subscheme of a smooth projective scheme $Y$ over a field, and let $X_t$ denote the~$t$-th thickening of $X$ in $Y$. Fix an ample line bundle $\mathcal{O}_Y(1)$ on $Y$. We prove the following asymptotic formulation of the Kodaira vanishing theorem: there exists an integer $c$, such that for all integers~$t \ge 1$, the cohomology group $H^k(X_t,\mathcal{O}_{X_t}(j))$ vanishes for $k < \dim X$ and~$j < -ct$. Note that there are no restrictions on the characteristic of the field, or on the singular locus of~$X$. We also construct examples illustrating that a linear bound is indeed the best possible, and that the constant $c$ is unbounded, even in a fixed dimension. 
\end{abstract}
\maketitle

%%%%%%%%%%%%%%%%%%%%%%%%%%%%%%%%%%%%%%%%%%%%%%%%%%%
\section{Introduction}
\label{section:introduction}
%%%%%%%%%%%%%%%%%%%%%%%%%%%%%%%%%%%%%%%%%%%%%%%%%%%

Let $Y$ be a projective scheme over a field, and let $X$ be a closed subscheme defined by an ideal sheaf~$\calI \subset\calO_Y$. For integers $t \ge 1$, let $X_t$ denote the $t$-th \emph{thickening} of $X$ in $Y$, i.e., the closed subscheme of $Y$ defined by $\calI^t$. In \cite{BBLSZ}, we proved the following version of the Kodaira vanishing theorem for thickenings of local complete intersection (lci) subvarieties of projective space $\PP^n$:

\begin{theorem}\cite[Theorem~1.4]{BBLSZ}
\label{theorem:BBLSZ1}
Let $X$ be a closed lci subvariety of $\PP^n$ over a field of characteristic zero. Then, for each $t\ge 1$ and $k < \codim(\Sing X)$, one has
\[
H^k(X_t,\ \calO_{X_t}(j)) = 0 \qquad\text{for } j < 0.
\]
\end{theorem}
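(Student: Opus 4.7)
The natural starting point is the conormal filtration: for each $t \ge 1$, the short exact sequence
\[
0 \to \calI^t/\calI^{t+1} \to \calO_{X_{t+1}} \to \calO_{X_t} \to 0
\]
of coherent $\calO_{\PP^n}$-modules, combined with induction on $t$, reduces the theorem to a uniform vanishing of the graded pieces. The lci hypothesis provides the canonical isomorphism $\calI^s/\calI^{s+1} \cong \Sym^s(\calN^*)$, where $\calN^* := \calI/\calI^2$ is a locally free $\calO_X$-module of rank $\codim_{\PP^n} X$, so the problem reduces to showing
\[
H^k\bigl(X,\ \Sym^s(\calN^*) \otimes \calO_X(j)\bigr) = 0 \quad \text{for all } s \ge 0,\ k < \codim \Sing X,\ j < 0.
\]

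The plan is to pull this vanishing down from Bott's theorem on $\PP^n$: in characteristic zero,
\[
H^k\bigl(\PP^n,\ \Sym^s(\Omega^1_{\PP^n}) \otimes \calO_{\PP^n}(j)\bigr) = 0 \quad \text{for all } k < n,\ j < 0.
\]
On the smooth locus $U := X \setminus \Sing X$, the conormal map yields an inclusion $\calN^* \into \Omega^1_{\PP^n}|_X$ of a subbundle, and the induced filtration on $\Sym^s(\Omega^1_{\PP^n}|_X)$ has associated graded $\bigoplus_{a+b=s} \Sym^a(\calN^*) \otimes \Sym^b(\Omega^1_X)$. A devissage on $s$, combined with the Koszul resolution of $\calO_X$ by wedge powers of $\calI \subset \calO_{\PP^n}$ (locally a free resolution, by the regular-sequence property of the lci ideal), would reduce the target vanishing on $X$ to Bott's vanishing on $\PP^n$ together with analogous statements for $\Sym^b(\Omega^1_X)(j)$ with $b < s$, handled inductively.

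The hard part is making this bookkeeping precise at the singular locus $Z := \Sing X$, since the conormal-cotangent inclusion ceases to identify $\calN^*$ with a subbundle at points of $Z$, and the Koszul complex on $\calI$ is only locally exact. The condition $k < \codim \Sing X$ is exactly what is needed for the Cohen-Macaulay property of $X$ (inherited from the lci hypothesis) to absorb this discrepancy: for any locally free $\calO_X$-module $\mathcal{F}$, one has $\calH^i_Z(\mathcal{F}) = 0$ for $i < \codim Z$, so contributions from $Z$ to cohomology appear only in degrees at or above $\codim Z$, i.e., outside our range. The characteristic-zero hypothesis enters essentially through Bott's vanishing on $\PP^n$ and through the Hodge-theoretic inputs used to handle $\Sym^b(\Omega^1_X)(j)$ on the possibly singular $X$.
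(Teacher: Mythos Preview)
The present paper does not prove this theorem: it is quoted from \cite{BBLSZ}, and the only information given here is that the argument ``relies on Hodge-theoretic input (via Kodaira vanishing).'' So there is no proof in this paper against which to compare your sketch.

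Judged on its own, your outline has a genuine gap in the induction. The reduction, via the conormal filtration, to showing $H^k(X,\Sym^s(\calN^*)(j))=0$ is correct, and filtering $\Sym^s(\Omega^1_{\PP^n}|_X)$ by the conormal subbundle is a natural move. But to isolate the piece $\Sym^s(\calN^*)$ you must control the cohomology of all the remaining graded pieces $\Sym^a(\calN^*)\otimes\Sym^b(\Omega^1_X)$ with $b\ge 1$. You say these are ``handled inductively,'' yet induction on $s$ only gives vanishing for $\Sym^a(\calN^*)$ with $a<s$, not for its tensor product with $\Sym^b(\Omega^1_X)$; the sheaf $\Omega^1_X$ is neither locally free nor positive on $X$, so nothing forces the tensor product to inherit the vanishing. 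If instead you strengthen the inductive hypothesis to cover all such products, the base case $a=0$ demands $H^k(X,\Sym^b(\Omega^1_X)(j))=0$ for all $b\ge 0$, which is at least as hard as the theorem you are trying to prove---and this is exactly where you defer to unspecified ``Hodge-theoretic inputs.'' That is the missing idea, not a bookkeeping detail. Two smaller points: Bott-type vanishing for $\Sym^s(\Omega^1_{\PP^n})(j)$ is characteristic-free, so the characteristic-zero hypothesis cannot enter there; and the ``Koszul resolution of $\calO_X$'' you invoke exists globally only when $X$ is a global complete intersection in $\PP^n$, so the passage from cohomology on $X$ to cohomology on $\PP^n$ also needs a different mechanism in the general lci case.
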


When $X$ is smooth and $t=1$, this is precisely what is obtained from the Kodaira vanishing theorem. There are well-known counterexamples in the case of positive char\-acteristic~\cite{Raynaud, LR}; the condition on the singular locus is needed as well in view of the examples from~\cite{Arapura-Jaffe}. Nonetheless, as we prove here, there is an \emph{asymptotic} version of the above vanishing theorem that holds in good generality:

\begin{theorem}
\label{theorem:asymptotic}
Let $Y$ be a smooth projective scheme over a field, equipped with an ample line bundle $\calO_Y(1)$. Let $X$ be a closed equidimensional lci subscheme of $Y$. Then there exists an integer $c \ge 0$, such that for each $t \ge 1$ and $k < \dim X$, one has
\[
H^k(X_t,\ \calO_{X_t}(j)) = 0 \qquad \text{for all } j < -ct,
\]
where, for a closed subscheme $Z \subset Y$ and integer $j$, we write $\calO_Z(j)\colonequals\calO_Y(1)^{\otimes j}|_Z$.
\end{theorem}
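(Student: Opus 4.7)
The plan is a three-step argument: a filtration argument reducing to cohomology of symmetric powers of the conormal bundle, a Serre duality computation on the projective bundle of the conormal, and a final application of Fujita's vanishing theorem. Throughout, set $d \colonequals \dim X$, $r \colonequals \codim_Y X$, and $\calN \colonequals \calI/\calI^2$. Since $X$ is lci, $\calN$ is locally free of rank $r$ and the natural map $\Sym^s\calN \to \calI^s/\calI^{s+1}$ is an isomorphism for every $s \ge 0$. Twisting the short exact sequence
\[
0 \to \calI^s/\calI^{s+1} \to \calO_{X_{s+1}} \to \calO_{X_s} \to 0
\]
by $\calO_Y(j)$, the associated long exact sequence of cohomology and induction on $t$ reduce the theorem to the existence of a constant $c \ge 0$ for which
\[
H^k\bigl(X,\ \Sym^s\calN \otimes \calO_X(j)\bigr) = 0 \quad\text{for all } k<d,\ s \ge 0,\ j < -c(s+1).
\]

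\emph{Dualizing on $\PP(\calN)$.} Let $\pi\colon P \colonequals \PP(\calN) \to X$. As $\pi_*\calO_P(s)=\Sym^s\calN$ with vanishing higher direct images, the cohomology above equals $H^k(P,\ \calO_P(s) \otimes \pi^*\calO_X(j))$. The lci adjunction formula $\omega_X \cong \omega_Y|_X \otimes (\det\calN)^{-1}$ yields $\omega_P \cong \calO_P(-r) \otimes \pi^*(\omega_Y|_X)$, and $P$ is Gorenstein. Applying Serre duality on $P$ (of dimension $d+r-1$), then Leray using the standard projective-bundle identity $R^{r-1}\pi_*\calO_P(-r-s) \cong \Sym^s(\calN^\vee)\otimes(\det\calN)^{-1}$, transforms the desired vanishing into the search for a constant $c$ with
\[
H^l\bigl(X,\ \Sym^s(\calN^\vee) \otimes L_0 \otimes \calO_X(n)\bigr) = 0 \quad\text{for all } l \ge 1,\ s \ge 0,\ n \ge c(s+1),
\]
where $L_0 \colonequals (\det\calN)^{-1} \otimes \omega_Y|_X$ is a fixed line bundle on $X$.

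\emph{Fujita vanishing on $\PP(\calN^\vee)$.} Passing to the second projective bundle $\sigma\colon Q \colonequals \PP(\calN^\vee) \to X$, the group above equals $H^l(Q,\ \calO_Q(s) \otimes \sigma^*(L_0 \otimes \calO_X(n)))$. Fix $c_1$ so that $A \colonequals \calO_Q(1) \otimes \sigma^*\calO_X(c_1)$ is ample on $Q$; then
\[
\calO_Q(s) \otimes \sigma^*\calO_X(n) \;=\; A^s \otimes \sigma^*\calO_X(n - c_1 s),
\]
and $\sigma^*\calO_X(n - c_1 s)$ is the pullback of a nef line bundle whenever $n \ge c_1 s$. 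By Keeler's characteristic-free version of Fujita's vanishing theorem, there is an integer $s_0$ such that $H^l(Q, \sigma^*L_0 \otimes A^s \otimes M) = 0$ for all $l > 0$, $s \ge s_0$, and all nef line bundles $M$ on $Q$. For the finitely many values $s < s_0$, ordinary Serre vanishing on $X$ supplies a uniform $n$-threshold. Absorbing $c_1$ and these thresholds into a single constant $c$ finishes the argument.

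The main obstacle is the absence of Kodaira vanishing in positive characteristic; its role is taken here by Fujita's vanishing theorem, and Keeler's uniformity over varying nef twists is precisely what keeps the bound linear in $t$.
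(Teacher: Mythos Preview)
Your reduction via the conormal filtration and the idea of dualizing to turn the problem into an asymptotic \emph{positive}-twist vanishing are exactly the moves the paper makes. The gap is in the middle step, and it is a characteristic-$p$ issue.

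The ``standard projective-bundle identity'' you invoke,
\[
R^{r-1}\pi_*\calO_P(-r-s)\ \cong\ \Sym^s(\calN^\vee)\otimes(\det\calN)^{-1},
\]
is correct only in characteristic zero. Over an arbitrary field the relative duality computation gives
\[
R^{r-1}\pi_*\calO_P(-r-s)\ \cong\ (\Sym^s\calN)^\vee\otimes(\det\calN)^{-1}\ =\ \Gamma^s(\calN^\vee)\otimes(\det\calN)^{-1},
\]
the $s$-th \emph{divided} power of $\calN^\vee$ (see e.g.\ Hartshorne, III, Exercise~8.4(c), which is stated in terms of $(\pi_*\calO_P(s))^\vee$). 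In characteristic zero $\Gamma^s\cong\Sym^s$ and your argument goes through; in characteristic $p$ they differ as soon as $s\ge p$. This matters for the next step: on $Q=\PP(\calN^\vee)$ one has $\sigma_*\calO_Q(s)=\Sym^s(\calN^\vee)$, not $\Gamma^s(\calN^\vee)$, so the cohomology group you need is \emph{not} $H^l(Q,\calO_Q(s)\otimes\sigma^*(L_0(n)))$, and the Fujita--Keeler argument on $Q$ no longer applies. Nor can one run Fujita directly on $P$, since after duality the relevant sheaf carries a negative power of $\calO_P(1)$.

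This is precisely the obstacle the paper is organized around: after Serre duality on $X$ one is left with $H^l(X,\Gamma^s(\calN^\vee)\otimes\omega_X(n))$, and the point of Proposition~\ref{proposition:gamma:vanishing} is to prove a uniform Serre-type vanishing for $\Gamma^s(E(c))\otimes F$ (valid in every characteristic) by resolving $E$ by sums of twists of $\calO_X$ and exploiting the compatibility of derived divided powers with filtrations. So your proof is complete in characteristic zero --- and there it gives a pleasant geometric alternative to the paper's argument, trading the divided-power machinery for Fujita vanishing --- but in positive characteristic the $\Sym$/$\Gamma$ discrepancy is a genuine gap, and something like the paper's Proposition~\ref{proposition:gamma:vanishing} seems needed to close it.
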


Unlike Theorem~\ref{theorem:BBLSZ1} that relies on Hodge-theoretic input (via Kodaira vanishing), the proof of Theorem~\ref{theorem:asymptotic} only uses Serre vanishing; this is why we do not need any assumption on the characteristic of the field in Theorem~\ref{theorem:asymptotic}.

In the case where $Y=\PP^n$, with $\calO_Y(1)$ the standard ample line bundle, Theorem~\ref{theorem:asymptotic} answers \cite[Questions~7.1~and~7.2]{DM:TAMS} in the lci case; see Corollaries~\ref{corollary:lim1} and~\ref{corollary:lim2}. The linear bound in Theorem~\ref{theorem:asymptotic} is best possible in view of Example~\ref{example:linear} where, for each integer $c\ge2$, we construct an lci scheme $X$ of dimension $1$ such that, for each $t\ge 1$, the cohomology group $H^0(X_t,\ \calO_{X_t}(j))$ vanishes for~$j\le -ct$, and is nonzero for $j=-ct+1$. Theorem~\ref{theorem:asymptotic} may fail---even in characteristic zero---when $X$ is not lci, see Example~\ref{example:non:lci}, or when $X$ is lci but not equidimensional, see Example~\ref{example:non:equidimensional}.

%%%%%%%%%%%%%%%%%%%%%%%%%%%%%%%%%%%%%%%%%%%%%%%%%%%
\section{Preliminaries}
\label{sec:Prelim}
%%%%%%%%%%%%%%%%%%%%%%%%%%%%%%%%%%%%%%%%%%%%%%%%%%%

Let $X$ be a projective scheme over a field $\FF$. Set $d\colonequals\dim X$. We use $D_{coh}(X)$ to denote the derived category of complexes
\[
\CD
\cdots @>>> P^{i-1} @>>> P^i @>>> P^{i+1} @>>> \cdots
\endCD
\]
of $\calO_X$-modules with coherent cohomology, and~$D^b_{coh}(X)$ for the full triangulated subcategory of bounded complexes, i.e., those with only finitely many nonzero cohomology groups. We use $D^{\le a}_{coh}(X)$ (resp. $D^{\ge a}_{coh}(X)$) for complexes whose cohomology vanishes for~$i>a$ (resp. $i< a$). It is straightforward that each complex in $D^{\le a}_{coh}(X)$ (resp. $D^{\ge a}_{coh}(X))$ is quasi-isomorphic to a complex $P^\bullet $ such that $P^i=0$ for $i>a$ (resp. $i<a$). In particular, each complex in $D^b_{coh}(X)$ is quasi-isomorphic to a complex $P^\bullet$ such that $P^i\ne 0$ only for finitely many integers $i$.

We use $D^{\le a}(\FF)$ to denote the derived category of complexes of $\FF$-vector spaces whose cohomology vanishes for $i>a$, with $D^{\ge a}(\FF)$ defined analogously.

Since the global section functor $R\Gamma(X,-)$ sends a coherent sheaf $E$ on $X$ to a complex in~$D^{\le d}(\FF)$, and since each element $P$ in~$D^b_{coh}(X)\cap D^{\le a}_{coh}(X)$ is represented by a complex~$P^\bullet$ such that $P^i\ne 0$ only for finitely many $i$ and $P^i=0$ for $i>a$, it follows by applying the hypercohomology spectral sequence to $P^\bullet$ that the complex $R\Gamma(X,P^\bullet)$ lies in $D^{\le a + d}(\FF)$; while we do not need it here, this is true even without the boundedness assumption.

A key technical ingredient is the derived $m$-th divided power functor
\[
\Gamma^m\colon D^{\le 0}_{coh}(X) \to D^{\le 0}_{coh}(X)
\]
constructed in \cite{Illusie}, see also \cite[Chapter~25]{LurieSAG} or \cite{Quillen}. We summarize the properties of $\Gamma^m$ that we use in this paper. For a locally free sheaf $E$ of finite rank, $\Gamma^m$ is the usual $m$-th divided power of $E$. In particular, one has in this case,
\[
\Gamma^m(E)\ =\ \Sym^m(E^\vee)^\vee,
\]
where $(-)^\vee = \mathcal{H}om(-,\calO_X)$. By \cite[25.2.4.1]{LurieSAG}, the functor $\Gamma^m$ preserves $D^{\le a}_{coh}(X)$ for all integers $a \le 0$. Just as divided powers are not an additive functor, neither is $\Gamma^m$; the functor~$\Gamma^m$ does not preserve shifts or exact triangles in general. However, $\Gamma$ is compatible with direct sums in the following sense: If $P = \bigoplus P^i$ is a (finite) direct sum, then
\[
\Gamma^m(P)\ \cong \bigoplus_{a_i \ge 0,\ \sum a_i = m} \bigotimes_i \Gamma^{a_i}(P^i).
\]
More generally, by \cite[5.4]{Illusie} or \cite[25.2]{LurieSAG}, $\Gamma^* \colonequals \bigoplus_m \Gamma^m$ extends to a monoidal functor on the filtered derived category, which is compatible with the formation of the associated graded object in the above sense. In particular, if $P^\bullet$ is a complex with a finite filtration whose associated graded object is $\bigoplus P^i$, then $\Gamma^m(P^\bullet)$ has a finite filtration with the associated graded object given by
\[
\bigoplus_{a_i \ge 0,\ \sum a_i = m} \bigotimes_i \Gamma^{a_i}(P^i).
\]

In our applications, an ample line bundle $\calO_X(1)$ on $X$ is usually fixed at the outset. Thus, for $E \in D_{coh}(X)$ and any integer $n$, we write $E(n)\colonequals E \otimes_{\calO_X} (\calO_X(1))^{\otimes n}$ as expected.

%%%%%%%%%%%%%%%%%%%%%%%%%%%%%%%%%%%%%%%%%%%%%%%%%%%
\section{Proof of the main theorem, and some consequences}
%%%%%%%%%%%%%%%%%%%%%%%%%%%%%%%%%%%%%%%%%%%%%%%%%%%

To prove Theorem~\ref{theorem:asymptotic}, we shall need a result which, very roughly speaking, is a variant of Serre vanishing where tensor powers of a sufficiently ample line bundle are replaced by divided powers of a sufficiently ample vector bundle. To make the proof flow better, it is convenient to formulate a more general statement involving complexes as follows:

\begin{proposition}
\label{proposition:gamma:vanishing}
Let $X$ be a projective scheme over a field $\FF$, equipped with an ample line bundle $\calO_X(1)$. Fix a coherent sheaf $F$ and $E \in D^{b}_{coh}(X)\cap D^{\le 0}_{coh}(X)$. Then, for $c \gg 0$, one~has
\[
R\Gamma\big(X,\ \Gamma^m(E(c)\big) \otimes F(l))\ \in\ D^{\le 0}(\FF)
\]
for all integers $l \ge 0$ and $m > 0$.
\end{proposition}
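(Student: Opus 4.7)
The strategy is to reduce to a uniform Serre-vanishing estimate via the filtered compatibility of $\Gamma^m$ recorded in Section~\ref{sec:Prelim}.

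Represent $E$ by a bounded complex $E^\bullet$ of coherent sheaves with $E^i=0$ outside $[-s,0]$. The stupid filtration $\sigma^{\ge p}E^\bullet$ has associated graded object $\bigoplus_p E^p[-p]$, all lying in $D^{\le 0}_{coh}(X)$ since $p \le 0$. By the filtered compatibility of $\Gamma^*$, the object $\Gamma^m(E^\bullet(c))$ carries a finite filtration with associated graded
\[
\bigoplus_{\vec a \in \NN^{s+1},\, |\vec a|=m} G_{\vec a}, \qquad G_{\vec a} \colonequals \bigotimes_{i=-s}^{0} \Gamma^{a_i}\bigl(E^i(c)[-i]\bigr).
\]
Since the class of $P \in D^b_{coh}(X)$ satisfying $R\Gamma(X, P \otimes F(l)) \in D^{\le 0}(\FF)$ for all $l \ge 0$ is stable under extensions, it suffices to establish the vanishing for each $G_{\vec a}$, uniformly in $\vec a$, for a single $c$ depending only on $E^\bullet$ and $F$.

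Next, analyse $G_{\vec a}$. Because $\Gamma^{a_i}$ is a polynomial functor of degree $a_i$ and the twist by the line bundle $\calO_X(c)$ satisfies $\Gamma^{a_i}(P \otimes \calO_X(c)) \simeq \Gamma^{a_i}(P) \otimes \calO_X(a_i c)$, one has $\Gamma^{a_i}(E^i(c)[-i]) \simeq \Gamma^{a_i}(E^i[-i]) \otimes \calO_X(a_i c)$ and hence
\[
G_{\vec a} \simeq H_{\vec a} \otimes \calO_X(mc), \qquad H_{\vec a} \colonequals \bigotimes_{i=-s}^{0} \Gamma^{a_i}\bigl(E^i[-i]\bigr) \in D^{\le 0}_{coh}(X).
\]
Using the Illusie/Lurie description of derived divided powers of shifted coherent sheaves, $H_{\vec a}$ is built from $E^\bullet$ by classical polynomial operations of total weighted degree $m$, and its Castelnuovo--Mumford regularity (computed with respect to $\calO_X(1)$ after embedding $X$ into projective space) admits a linear bound $\operatorname{reg}(H_{\vec a}) \le m R_0$ for a constant $R_0$ depending only on $E^\bullet$.

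Finally, choose $c \ge R_0 + \operatorname{reg}(F) + \dim X$. Then $mc + l \ge m R_0 + \operatorname{reg}(F) + \dim X$ for every $m \ge 1$ and $l \ge 0$, and standard Serre vanishing forces $R\Gamma(X, H_{\vec a} \otimes F \otimes \calO_X(mc+l))$ to be concentrated in cohomological degree $0$. Accounting for the non-positive cohomological shifts inherited from $G_{\vec a}$, this places $R\Gamma(X, G_{\vec a} \otimes F(l))$ in $D^{\le 0}(\FF)$; reassembling via the filtration then yields the proposition. The principal obstacle is the uniform linear regularity bound on $H_{\vec a}$: in characteristic zero it reduces to standard estimates on the regularity of symmetric and exterior powers, but in arbitrary characteristic one must keep track of the more intricate Illusie/Lurie derived divided powers of shifted objects to verify that $\operatorname{reg}(H_{\vec a})/m$ remains bounded as $\vec a$ varies with $|\vec a|=m \to \infty$.
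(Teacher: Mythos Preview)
Your outline correctly identifies the filtered compatibility of $\Gamma^m$ as the engine of the argument, and the reduction to graded pieces $G_{\vec a}$ is exactly what the paper does. However, the step you yourself flag as the ``principal obstacle''---the uniform linear bound $\operatorname{reg}(H_{\vec a}) \le mR_0$---is a genuine gap, not a technicality. You assert it without argument, and in positive characteristic the derived divided powers $\Gamma^{a_i}(E^i[-i])$ of a shifted \emph{arbitrary} coherent sheaf are not described by any short list of ``classical polynomial operations'' whose regularity is known to be subadditive. Even in characteristic zero, and even for sheaves in degree zero, a bound like $\operatorname{reg}(\Sym^a G) \le a\cdot\operatorname{reg}(G)$ is not a standard fact for arbitrary coherent $G$; and your subsequent use of ``$\operatorname{reg}(H_{\vec a}\otimes F) \le \operatorname{reg}(H_{\vec a}) + \operatorname{reg}(F)$'' implicitly invokes subadditivity of regularity under tensor product, which again fails in general.

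The paper sidesteps this entirely by a different choice of representative: instead of a bounded complex of arbitrary coherent sheaves, it resolves $E$ by a complex each of whose terms is a finite direct sum of twists $\calO_X(j_i)$. Then Lemma~\ref{lemma:gamma:sum:twists} computes $\Gamma^{a}(\calO_X(j)[b]) \simeq \calO_X(aj)\otimes_\FF \Gamma^a(\FF[b])$ explicitly, so each graded piece $G_{\vec a}$ is visibly a direct sum of objects $\calO_X(j)\otimes_\FF V$ with $j$ growing at least linearly in $m$ once $c$ is large, and $V\in D^{\le 0}(\FF)$. This replaces your unproven regularity estimate with an elementary calculation. The price is that such a resolution is only bounded above, so the paper truncates at depth $r>\dim X$ and checks, again via the filtration on $\Gamma^m$, that the truncation error lands in $D^{\le -\dim X}_{coh}(X)$ and hence contributes nothing to $H^{>0}$ after $R\Gamma$. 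That truncation step is the actual content you are missing; once you have it, the linear twist bound falls out for free and no regularity theory is needed.
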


The idea of the proof is to choose a representative of $E$ where each term is a direct sum of twists of the structure sheaf $\calO_X$, and then use Serre vanishing. However, to avoid working with unbounded complexes, we only choose an ``approximate representative'' for~$E$, i.e., one that does not change cohomology in a certain range of degrees. The key point is Lemma~\ref{lemma:gamma:sum:twists}, which ensures that applying derived divided powers to a shift of a ``positive'' complex can only increase ``positivity.''

\begin{proof}
Fix a coherent sheaf $F$ on $X$ as in the statement of the proposition. By Serre vanishing, there exists an integer $j_0 > 0$ such that $H^i(X,\ F(j)) = 0$ for all $i > 0$ and $j \ge j_0$. Stated differently, $R\Gamma(X,\ F(j)) \in D^{\le 0}(\FF)$ for $j \ge j_0$.

For the purpose of the proof, we may replace $E$ by any complex quasi-isomorphic to $E$. By constructing a resolution of $E$ whose terms consist of finite direct sums of twists of $\calO_X$, we may hence assume that $E$ is bounded above by zero, and that each $E^i$ is a finite direct sum of twists of $\calO_X$. Set $d\colonequals\dim X$. For an integer $r$ with $r>d$, set $P^\bullet$ to be
\[
\CD
0 @>>> E^{-r} @>>> E^{-(r-1)} @>>> \cdots @>>> E^{-1} @>>> E^0 @>>> 0.
\endCD
\]
Then each $P^i$ is a finite direct sum of twists of $\calO_X$, and the cokernel $Q^\bullet$ of the injective map~$P^\bullet \to E^\bullet$ lies in $D^{b}_{coh}(X)\cap D^{\le -r}_{coh}(X)$.

For each integer $c$, we view
\[
\varphi\colon P^\bullet(c) \into E^\bullet(c)
\] 
as a one-step decreasing filtration of $E^\bullet(c)$, normalized so that $\mathrm{gr}^1(E^\bullet(c)) = P^\bullet(c)$ and $\mathrm{gr}^0(E^\bullet(c)) = Q^\bullet(c)$. By the compatibility of $\Gamma^m$ with filtrations, as discussed in \S\ref{sec:Prelim}, we obtain an induced filtration on~$\Gamma^m(E^\bullet(c))$ with the associated graded pieces given by
\[
\mathrm{gr}^a(\Gamma^m(E^\bullet(c))\ =\ \Gamma^a(P^\bullet(c)) \otimes \Gamma^b(Q^\bullet(c)), \qquad\text{with } a+b=m,
\]
where negative divided powers are understood to be $0$. Thus, the graded pieces vanish unless $0 \le a \le m$, and $a=0$ gives the ``top'' graded piece (i.e., the quotient) while $a=m$ gives the ``bottom'' graded piece (i.e., a subobject). In particular, the map
\[
\Gamma^m(\varphi)\colon \Gamma^m(P^\bullet(c)) \to \Gamma^m(E^\bullet(c))
\]
identifies with the inclusion 
\[
\CD
\mathrm{gr}^m(\Gamma^m(E^\bullet(c))) @<{\simeq}<< \mathrm{Fil}^m(\Gamma^m(E^\bullet(c))) @>>> \Gamma^m(E^\bullet(c)),
\endCD
\]
and hence its cokernel (which we regard as a representative for its cone in the derived category) carries a filtration whose graded pieces have the form
\[
\Gamma^a(P^\bullet(c)) \otimes \Gamma^b(Q^\bullet(c)), \qquad\text{with } a+b=m \text{ and }b>0.
\]
Since $\Gamma^a$ preserves $D^{\le i}_{coh}(X)$ for $i \le 0$, we have $\Gamma^a(P^\bullet) \in D^{\le 0}_{coh}(X)$ and $\Gamma^b(Q^\bullet) \in D^{\le -d}_{coh}(X)$ provided $b>0$, and hence their tensor product lies in $D^{\le -d}_{coh}(X)$. Since tensoring with $F(j)$ preserves $D^{\le -d}_{coh}(X)$, we see that the cone of
\[
\Gamma^m(P^\bullet(c)) \otimes F(j) \to \Gamma^m(E(c)) \otimes F(j)
\]
also lies in $D^{\le -d}_{coh}(X)$ for all $m \ge 0$ and $c,j \in \ZZ$.

Since $R\Gamma(X,-)$ takes $D^{\le -d}_{coh}(X)$ to $D^{\le 0}(\FF)$, the cone of
\[
R\Gamma(X,\ \Gamma^m(P^\bullet(c)) \otimes F(j)) \to R\Gamma(X,\ \Gamma^m(E(c)) \otimes F(j))
\]
lies in $D^{\le 0}(\FF)$ for all $m \ge 0$ and $c,j \in \ZZ$. It is thus sufficient to prove the proposition when~$E$ is replaced by $P^\bullet$; indeed, for the remainder of the proof, we take $E$ to be $P^\bullet$.

By construction, $P^i = 0$ for $i > 0$ and $i < -r$. Consider the filtration on $P^\bullet(c)$ with the~$i$-th filtered piece given by
\[
\CD
0 @>>> P^{-i}(c) @>>> \cdots @>>> P^{0}(c)@ >>>0.
\endCD
\]
By the compatibility of $\Gamma^m$ with filtrations, we get that $\Gamma^m(P^\bullet(c))$ has a filtration with associated graded object
\[
\bigoplus_{a_i\ge0,\ \sum a_i=m} \Gamma^{a_0}(P^0(c)) \otimes \Gamma^{a_1}(P^{-1}(c)[1]) \otimes \dots \otimes \Gamma^{a_r}(P^{-r}(c)[r])
\]
for each $m \ge 0$ and $c \in \ZZ$. Tensoring with $F(j)$, we see that for each $c,j \in \ZZ$ and $m \ge 0$, the complex $\Gamma^m(P^\bullet(c)) \otimes F(j)$ has a finite filtration with associated graded object
\[
\bigoplus_{a_i\ge0,\ \sum a_i=m} \Gamma^{a_0}(P^0(c)) \otimes \Gamma^{a_1}(P^{-1}(c)[1]) \otimes \dots \otimes \Gamma^{a_r}(P^{-r}(c)[r]) \otimes F(j).
\]
It is thus enough to show: for $m > 0$, $j \ge 0$, and $c \gg 0$, applying $R\Gamma(X,-)$ to each of the terms in the direct sum above produces an object in $D^{\le 0}(\FF)$. Fix such a term corresponding to an index of the form $m = \sum_i a_i$ with $a_i \ge 0$.

As each $P^{-i}$ is a finite direct sum of twists of the structure sheaf, and only finitely many terms $P^{-i}$ are nonzero, we know that for $c \gg 0$, each $P^{-i}(c)$ is a direct sum of line bundles of the form $\calO_X(j)$ for $j \ge j_0$, where $j_0$ was the integer chosen at the start of the proof. By Lemma~\ref{lemma:gamma:sum:twists} below, there are now two possibilities for the term $\Gamma^{a_i}(P^{-i}(c)[i])$ appearing above: if $a_i = 0$, we simply get $\calO_X$, while for $a_i > 0$, we get a complex which is a direct sum of complexes of the form $\calO_X(j) \otimes_\FF V$ with $V \in D^{\le 0}(\FF)$. Since $m=\sum_i a_i$ is positive, we must have $a_i > 0$ for at least one $i$. Thus, the complex displayed above is a direct sum of complexes of the form $F(j) \otimes_\FF V$ for some~$j \ge j_0$ and~$V \in D^{\le 0}(\FF)$. By our choice of~$j_0$, we know that
\[
R\Gamma(X,\ F(j) \otimes_\FF V)\ \in\ D^{\le 0}(\FF)
\]
if $j \ge j_0$ and $V \in D^{\le 0}(\FF)$, which completes the proof.
\end{proof}

\begin{lemma}
\label{lemma:gamma:sum:twists}
Let $X$ be a projective scheme over a field $\FF$, equipped with an ample line bundle $\calO_X(1)$. Let $b,j_1,\dots,j_s$ be integers, where $b \ge 0$, and set
\[
E \colonequals \bigoplus_{i=1}^s \calO_X(j_i)[b],
\]
which is a shift of a direct sum of twists of $\calO_X$. Then, for each integer $a \ge 0$, one has
\[
\Gamma^a(E)\ = \bigoplus_{a_i\ge0,\ \sum a_i=a}\calO_X(a_1j_1 + \dots + a_sj_s)\otimes_\FF\Gamma^{a_1}(\FF[b])\otimes_\FF\dots\otimes_\FF\Gamma^{a_s}(\FF[b]),
\]
where each $\Gamma^{a_i}(\FF[b])$ is a complex of $\FF$-vector spaces lying in $D^{\le 0}(\FF)$.
\end{lemma}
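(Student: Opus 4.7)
My plan is to apply the direct-sum compatibility of $\Gamma^*$ recalled in Section~\ref{sec:Prelim} to split $\Gamma^a(E)$, and then to simplify each resulting tensor factor by separating the line bundle twist from the cohomological shift.

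Concretely, since $E = \bigoplus_{i=1}^s \calO_X(j_i)[b]$ is a finite direct sum, the compatibility formula gives
\[
\Gamma^a(E)\ \cong\ \bigoplus_{a_i \ge 0,\ \sum a_i = a}\ \bigotimes_{i=1}^s \Gamma^{a_i}\bigl(\calO_X(j_i)[b]\bigr),
\]
so the task reduces to identifying each factor $\Gamma^{a_i}(\calO_X(j_i)[b])$ with $\calO_X(a_i j_i) \otimes_\FF \Gamma^{a_i}(\FF[b])$. For this I would invoke two structural compatibilities of the derived divided power functor. First, for a line bundle $L$ and any $M \in D^{\le 0}_{coh}(X)$, the identity $\Gamma^{a_i}(L \otimes_{\calO_X} M) \cong L^{\otimes a_i} \otimes_{\calO_X} \Gamma^{a_i}(M)$: on locally free $V$ this is the classical identification $\Gamma^{a_i}(L \otimes V) = L^{\otimes a_i} \otimes \Gamma^{a_i}(V)$, which drops out directly from the definition $\Gamma^{a_i}(-) = \Sym^{a_i}((-)^\vee)^\vee$ since $L$ has rank one, and it passes to the derived functor. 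Second, the $\FF$-linear construction of $\Gamma^*$ from \cite{Illusie} or \cite{LurieSAG} is compatible with extension of scalars, in the sense that $\Gamma^{a_i}_{\calO_X}(V \otimes_\FF \calO_X) \cong \Gamma^{a_i}_\FF(V) \otimes_\FF \calO_X$ for $V \in D^{\le 0}(\FF)$. Applying the first with $L = \calO_X(j_i)$ and $M = \calO_X[b]$, and then the second with $V = \FF[b]$, yields the desired identification of each factor. Tensoring over $i$ and separating the $\calO_X$-tensor products of line bundles (which collapse to $\calO_X(\sum a_i j_i)$) from the $\FF$-tensor products of the vector space complexes then delivers the stated formula.

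The final assertion $\Gamma^{a_i}(\FF[b]) \in D^{\le 0}(\FF)$ is immediate from the preservation property recalled in Section~\ref{sec:Prelim}: since $b \ge 0$, the complex $\FF[b]$ lies in $D^{\le -b}(\FF) \subseteq D^{\le 0}(\FF) = D^{\le 0}_{coh}(\Spec \FF)$, and $\Gamma^{a_i}$ preserves this subcategory. The main technical obstacle is justifying the two compatibilities of $\Gamma^{a_i}$ with line bundle twists and with base change; they are transparent at the classical level (locally free sheaves over a field), but extracting them from the nonabelian derived construction of $\Gamma^*$ requires a careful read of the references. Granted these, everything else is bookkeeping.
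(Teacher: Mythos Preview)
Your proposal is correct and follows essentially the same route as the paper: direct-sum compatibility of $\Gamma^*$, the identification $\Gamma^{a_i}(\calO_X(j_i)[b]) \simeq \calO_X(a_ij_i)\otimes_\FF \Gamma^{a_i}(\FF[b])$, and the preservation of $D^{\le 0}$ for the final clause. The only difference is that the paper asserts this identification as a bare fact, whereas you unpack it into the two structural compatibilities (line-bundle twist and base change) and flag them as the points needing a look at the references; that is a fair assessment, not a gap.
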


\begin{proof}
As $\Gamma^*(-)$ preserves $D^{\le 0}(\FF)$, the containment in $D^{\le 0}(\FF)$ asserted at the end is automatic. The rest follows from the behavior of $\Gamma^a$ under direct sums, and the fact that
\[
\Gamma^a(\calO_X(j)[b])\ \simeq\ \calO_X(aj)\otimes_\FF\Gamma^a(\FF[b])
\]
for integers $a,b,j$ with $a,b \ge 0$.
\end{proof}

\begin{proof}[Proof of Theorem~\ref{theorem:asymptotic}]
Set $d\colonequals\dim X$, and let $\calI \subset\calO_Y$ be the ideal sheaf of the lci subscheme $X \into Y$, so $\calI/\calI^2$ is the conormal bundle of this closed immersion. Since X is lci and equidimensional, its dualizing complex has the form $\omega_X[d]$ for a line bundle $\omega_X$, so Serre duality says
\[
H^i(X,\ \calO_X(j))\ \cong\ H^{d-i}(X,\ \omega_X(-j))^\vee.
\]
By Serre vanishing, there exists an integer $c_0 \ge 1$ such that
\[
H^{d-i}(X,\ \omega_X(-j))=0\qquad \text{for all $-j\ge c_0$ and $i<d$}.
\]
Equivalently, we have
\[
R\Gamma(X,\ \calO_X(j))\ \in\ D^{\ge d}(\FF) \qquad\text{for } j \le -c_0.
\]
We shall reduce the rest of the proof to the following assertion:

There exists an integer $c_1 \ge 0$ such that, for each integer~$s \ge 1$, one has
\begin{equation}
\label{equation:sym}
R\Gamma\big(X,\ \Sym^s(\calI/\calI^2)(j)\big)\ \in\ D^{\ge d}(\FF) \qquad\text{for } j < -c_1s.
\end{equation}

We claim that~\eqref{equation:sym} implies the theorem. Indeed, given an integer $t \ge 1$ as in the theorem, summing the conclusion of~\eqref{equation:sym} for $s=1,\dots,t-1$ implies that
\[
R\Gamma\big(X_t,\ \calI/\calI^t\big) \in\ D^{\ge d}(\FF)
\]
for $j < -c_1(t-1) = -c_1 t + c_1$, and hence also for $j < -c_1t$. Taking $c=\max(c_0,c_1)$ gives the theorem.

It remains to prove~\eqref{equation:sym}. Let $\calN \colonequals (\calI/\calI^2)^\vee$ denote the normal bundle. Using Serre duality, it suffices to show that there exists $c_1 \ge 0$, such that for each $s \ge 1$, one has
\[
R\Gamma\big(X,\ \Gamma^s(\calN)(j) \otimes \omega_X\big)\ \in\ D^{\le 0}(\FF) \qquad\text{for } j > c_1s.
\]
But this follows from Proposition~\ref{proposition:gamma:vanishing}, since
\[
\Gamma^s(\calN)(as+b)\ =\ \Gamma^s(\calN(a))(b)
\]
for all integers $a,b$.
\end{proof}

We record implications of Theorem~\ref{theorem:asymptotic} for local cohomology modules. By a \emph{standard graded ring} over a field $\FF$, we mean an $\NN$-graded ring $R$ with $R_0=\FF$ that is generated, as an $\FF$-algebra, by finitely many elements of $R_1$. Let $R$ be a standard graded polynomial ring over a field, and let $I$ be a homogeneous ideal. For $t\ge 1$, set $X_t\colonequals\Proj R/I^t$. Let $j$ be an arbitrary integer. Using $\frakm$ to denote the homogeneous maximal ideal of $R$, one has an exact sequence relating local cohomology and sheaf cohomology:
\begin{equation}
\label{equation:four:term}
\CD
0 @>>> {H^0_\frakm(R/I^t)}_j @>>> {(R/I^t)}_j @>>> H^0(X_t,\ \calO_{X_t}(j)) @>>> {H^1_\frakm(R/I^t)}_j @>>> 0.
\endCD
\end{equation}
Moreover, for each $k\ge 1$, one has
\[
H^k(X_t,\ \calO_{X_t}(j))\ =\ {H^{k+1}_\frakm(R/I^t)}_j.
\]

The asymptotic behavior of lengths of local cohomology modules has been studied extensively, see~\cite{Cutkosky} and the references therein. For $R$ an analytically unramified local ring and~$I$ an arbitrary ideal, the limit
\[
\lim_{t\to\infty}\ell(H^0_\frakm(R/I^t))/t^{\dim R}
\]
exists by~\cite[Corollary~6.3]{Cutkosky}. In \cite[Theorem~1.2]{CHST} the authors give an example where this limit is irrational, for $I$ defining a smooth complex projective curve. In the context of local cohomology, Theorem~\ref{theorem:asymptotic} yields the following:

\begin{corollary}
\label{corollary:lim1}
Let $R$ be a standard graded polynomial ring over a field, and $\frakm$ the homogeneous maximal ideal of $R$. Suppose $I$ is a homogeneous ideal such that $R/I$ is equidimensional and $\Proj R/I$ is lci. Then
\[
\limsup_{t\to\infty}\frac{\ell\big(H^k_\frakm(R/I^t)\big)}{t^{\dim R}}\ <\ \infty
\]
for each $k < \dim R/I$.
\end{corollary}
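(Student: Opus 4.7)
The plan is to express $H^k_\frakm(R/I^t)$ via sheaf cohomology, use Theorem~\ref{theorem:asymptotic} to control the support at the negative end of the grading, and combine with standard Castelnuovo--Mumford regularity bounds to derive a polynomial length estimate.

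First, I would verify that $H^k_\frakm(R/I^t)$ has finite length, so that the limsup makes sense. Setting $d \colonequals \dim R/I$ (so $\dim X = d-1$ where $X \colonequals \Proj R/I$), Theorem~\ref{theorem:asymptotic} applied to $X \into \Proj R$ yields $H^k(X_t, \calO_{X_t}(j)) = 0$ for $k < d - 1$ and $j < -ct$. Via the isomorphism $H^{k+1}_\frakm(R/I^t)_j \cong H^k(X_t, \calO_{X_t}(j))$ for $k \ge 1$, together with the exact sequence~\eqref{equation:four:term} for $k = 0, 1$, this translates into $H^k_\frakm(R/I^t)_j = 0$ for $k < d$ and $j < -ct$. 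Since local cohomology of any finitely generated graded $R$-module vanishes in sufficiently high degree, $H^k_\frakm(R/I^t)$ is bounded in both directions with finite-dimensional graded pieces, hence has finite length.

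Next, I would bound this length by $O(t^{\dim R})$. The Cutkosky--Herzog--Trung and Kodiyalam theorem on linear growth of regularity of powers gives $\operatorname{reg}(R/I^t) \le c' t$ for some $c'$, so $H^k_\frakm(R/I^t)_j = 0$ for $j > c' t$; combined with the lower bound above, the support of $H^k_\frakm(R/I^t)$ lies in an interval of length $O(t)$. I would then bound each graded piece $\dim_\FF H^k_\frakm(R/I^t)_j$ by $O(t^{\dim R - 1})$: for $k = 0$ this is immediate, since $H^0_\frakm(R/I^t)_j \subseteq (R/I^t)_j \subseteq R_j$ and $\dim R_j = \binom{j+n-1}{n-1} = O(t^{\dim R - 1})$ for $j = O(t)$; for $k \ge 1$, I would use the filtration of $\calO_{X_t}$ whose graded pieces are, by the lci hypothesis, isomorphic to $\Sym^s(\calI/\calI^2)$ for $s = 0, \ldots, t-1$, reducing the estimate to Hilbert-function bounds on the bigraded algebra $\operatorname{gr}_I(R) = \bigoplus_s I^s/I^{s+1}$. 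Multiplying an interval of length $O(t)$ by a per-piece bound of $O(t^{\dim R - 1})$ yields total length $O(t^{\dim R})$.

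The main obstacle is the per-graded-piece bound for $k \ge 1$: translating via the filtration gives
\[
\dim H^{k-1}(X_t, \calO_{X_t}(j))\ \le\ \sum_{s=0}^{t-1} \dim H^{k-1}(X, \Sym^s(\calI/\calI^2)(j)),
\]
and bounding each summand polynomially in $s$ and $j$ forces one to combine Serre duality on the equidimensional lci scheme $X$ (where $\omega_X$ is a line bundle) with Proposition~\ref{proposition:gamma:vanishing} applied to the normal bundle $\calN = (\calI/\calI^2)^\vee$, echoing the argument used in the proof of Theorem~\ref{theorem:asymptotic}.
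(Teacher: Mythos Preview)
Your overall strategy matches the paper's: use Theorem~\ref{theorem:asymptotic} to get $H^k_\frakm(R/I^t)_j=0$ for $j<-ct$, and then deduce a polynomial length bound. The paper, however, does not carry out the second step: it simply cites \cite[Corollary~6.3]{Cutkosky} for $k=0$ and \cite[Theorem~5.3]{DM:TAMS} for $k\ge 1$, the latter being precisely the statement that a linear lower bound on the graded support of $H^k_\frakm(R/I^t)$ forces $\ell(H^k_\frakm(R/I^t))=O(t^{\dim R})$. Your direct argument for $k=0$ is fine and more elementary than the Cutkosky citation.

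The gap is in your per-graded-piece bound for $k\ge 1$. The filtration inequality
\[
\dim H^{k-1}(X_t,\calO_{X_t}(j))\ \le\ \sum_{s=0}^{t-1}\dim H^{k-1}\big(X,\Sym^s(\calI/\calI^2)(j)\big)
\]
is correct, but neither of your proposed ways to control the right-hand side works as stated. ``Hilbert-function bounds on $\operatorname{gr}_I(R)$'' bound $\dim(I^s/I^{s+1})_j$, not $\dim H^{k-1}(X,\Sym^s(\calI/\calI^2)(j))$; sheaf cohomology of a coherent sheaf is not controlled by the value of its Hilbert function at the same twist. Your fallback, Serre duality combined with Proposition~\ref{proposition:gamma:vanishing}, only reproduces the \emph{vanishing} in Theorem~\ref{theorem:asymptotic}; it gives no quantitative bound on the nonzero groups in the range $-ct\le j\le c't$. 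What is actually needed is a bigraded argument over the Rees algebra (or, equivalently, a polynomial bound for cohomology on the projectivized normal bundle $\PP(\calN)\to X$ with respect to the two natural line bundles), which is the substance of \cite[Theorem~5.3]{DM:TAMS} and is not reproduced in your sketch.
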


\begin{proof}
The case $k=0$ is covered by~\cite[Corollary~6.3]{Cutkosky}, so assume $k\ge 1$. By Theorem~\ref{theorem:asymptotic} applied to $Y=\PP^n$, with $\calO_Y(1)$ being the standard ample line bundle, there exists an integer~$c \ge 0$, such that for each $t \ge 1$ and $k < \dim R/I$, one has
\[
{H^k_\frakm(R/I^t)}_j = 0 \qquad\text{for } j < -ct.
\]
The result now follows from \cite[Theorem~5.3]{DM:TAMS}.
\end{proof}

\begin{corollary}
\label{corollary:lim2}
Let $R$ be a standard graded polynomial ring over a field, with homogeneous maximal ideal $\frakm$. Suppose $I$ is a homogeneous radical ideal such that $R/I$ is equidimensional and~$\ell\big(H^k_\frakm(R/I^t)\big)<\infty$ for each $k < \dim R/I$ and $t\ge1$. Then, for each $ k <\dim R/I$,
\[
\limsup_{t\to\infty}\frac{\ell\big(H^k_\frakm(R/I^t)\big)}{t^{\dim R}}\ <\ \infty.
\]
\end{corollary}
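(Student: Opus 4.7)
The plan is to reduce the statement to a vanishing result of the same shape as that used in the proof of Corollary~\ref{corollary:lim1}: it should suffice to exhibit an integer $c \ge 0$ such that $H^k_\frakm(R/I^t)_j = 0$ for all $k < \dim R/I$ and all $j < -ct$, after which \cite[Theorem~5.3]{DM:TAMS} delivers the limsup bound. In the present setting, the role previously played by the lci hypothesis on $\Proj R/I$ must be taken over by the finite length hypothesis, which supplies no \emph{a priori} uniform control as $t$ varies.

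My first step is to set up the cohomological translation. For $k \ge 2$, one has $H^k_\frakm(R/I^t)_j = H^{k-1}(X_t, \calO_{X_t}(j))$, while for $k = 0, 1$ the four-term sequence~\eqref{equation:four:term} relates local and sheaf cohomology. The finite length hypothesis ensures that, for each fixed $t$, these groups vanish outside a finite range of degrees, but this range \emph{a priori} depends on $t$. To bound the range linearly in $t$, I would use the Chardin--Herzog--Trung type linear growth of $\mathrm{reg}(R/I^t)$ to control the top of the range, leaving the linear lower bound as the substantive content. Combined with the elementary pointwise bound $\dim_\FF H^k_\frakm(R/I^t)_j \le \dim_\FF (R/I^t)_j$ from the Hilbert function, these three ingredients together give $\ell(H^k_\frakm(R/I^t)) = O(t^{\dim R})$.

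For the lower bound I would exploit the radical and equidimensional hypotheses: the reduced scheme $X = \Proj R/I$ has a dense open lci locus $U$ with closed complement $Z$ of positive codimension. On $U$ the thickenings are lci, so Proposition~\ref{proposition:gamma:vanishing} (applied on a compactification of~$U$, or on $X$ itself with an error term supported on $Z$) should produce a linear lower bound. The finite length hypothesis is then exactly what is needed to argue that the contribution of the ``defect'' supported on $Z$ has finite length in each cohomological degree and grows at most polynomially in $t$, so it does not disturb the desired vanishing below $-ct$.

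The main obstacle is making this local-to-global reduction rigorous. Proposition~\ref{proposition:gamma:vanishing} is stated globally on a projective scheme, so applying it to the open subset $U$ requires either a careful compactification argument preserving the lci property, or a relative version adapted to the pair $(X, Z)$. An alternative approach, via graded local duality---recasting the vanishing as a bound on generating degrees of $\operatorname{Ext}^{n-k}_R(R/I^t, R)$, whose supports are by hypothesis contained in $\{\frakm\}$---runs into essentially the same difficulty in a dual guise. I expect that a combination of the two viewpoints, together with the divided-power machinery of Section~\ref{sec:Prelim}, ultimately yields the required linear vanishing.
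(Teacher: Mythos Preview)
Your plan misses the key observation that makes this corollary immediate from Corollary~\ref{corollary:lim1}. The finite length hypothesis on $H^k_\frakm(R/I^t)$ for \emph{all} $t \ge 1$ and all $k < \dim R/I$ says precisely that $(R/I^t)_\frakp$ is Cohen--Macaulay for every $\frakp \neq \frakm$ and every $t \ge 1$. Since $I$ is radical, the theorem of Cowsik and Nori \cite[page~219]{Cowsik-Nori} (in a regular local ring, $A/\fraka^t$ is Cohen--Macaulay for all $t$ if and only if $A/\fraka$ is a complete intersection) then forces $(R/I)_\frakp$ to be a complete intersection for each $\frakp \neq \frakm$. Thus $\Proj R/I$ is lci on the nose, and Corollary~\ref{corollary:lim1} applies directly.

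In particular, your non-lci locus $Z$ is empty, and the local-to-global reduction you sketch---compactifying $U$, bounding an error term supported on $Z$, or dualizing to an $\operatorname{Ext}$ statement---is unnecessary. The obstacle you yourself flag is a real one: absent the Cowsik--Nori input, the finite length of $H^k_\frakm(R/I^t)$ for each fixed $t$ gives no uniform-in-$t$ control on its lowest nonvanishing degree, and it is not clear your outline could be completed in that generality. As a minor aside, the ``elementary pointwise bound'' $\dim_\FF H^k_\frakm(R/I^t)_j \le \dim_\FF (R/I^t)_j$ is false for $k \ge 1$, since higher local cohomology is not a subquotient of $R/I^t$; one would need some other polynomial bound there. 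Fortunately none of this matters once Cowsik--Nori is invoked.
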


\begin{proof} For a radical ideal $\fraka$ in a regular local ring $A$, a theorem of Cowsik and Nori implies that~$A/\fraka^t$ is Cohen-Macaulay for each~$t\ge 1$ if and only if $A/\fraka$ is a complete intersection ring, \cite[page~219]{Cowsik-Nori}. The finiteness of the length of each local cohomology module~$H^k_\frakm(R/I^t)$, for $k < \dim R/I$, implies that $(R/I^t)_\frakp$ is Cohen-Macaulay for each~$t\ge 1$ and~$\frakp\in\Spec R\smallsetminus\{\frakm\}$. It follows that $(R/I)_\frakp$ is a complete intersection ring for each~$\frakp\neq\frakm$, and hence that $\Proj R/I$ is lci. The desired result is now immediate from Corollary~\ref{corollary:lim1}.
\end{proof}

\begin{remark}
In the recent paper \cite{DM:MathZ}, the authors prove the following result: Let $R$ be a standard graded ring over a field of characteristic zero; let $\frakm$ denote the homogeneous maximal ideal of $R$. Suppose $I$ is a homogeneous ideal such that $R/I$ is Cohen-Macaulay and of dimension at least $2$, and $I$ is locally a complete intersection on $\Spec R\smallsetminus\{\frakm\}$. Fix an integer $k$ with $k<\dim R/I$. Then, for $t\ge 1$, the lowest degree in which the local cohomology module $H^k_\frakm(R/I^t)$ is nonzero is bounded below by a linear function of $t$.

The hypotheses in \cite{DM:MathZ} are somewhat different from those in Theorem~\ref{theorem:asymptotic} of the present paper, where there is no assumption on the characteristic, nor do we require the ring~$R/I$ to be Cohen-Macaulay.
\end{remark}

%%%%%%%%%%%%%%%%%%%%%%%%%%%%%%%%%%%%%%%%%%%%%%%%%%%
\section{Examples}
%%%%%%%%%%%%%%%%%%%%%%%%%%%%%%%%%%%%%%%%%%%%%%%%%%%

The following example, which is a variation of \cite[Example~5.7]{BBLSZ}, shows that the bound in Theorem~\ref{theorem:asymptotic} cannot be better than linear; the example also shows that the constant~$c$ in the theorem may be unbounded, even when $\dim X$ is fixed.

\begin{example}
\label{example:linear}
Consider the polynomial ring $R\colonequals\FF[x,y,u,v,w]$, where $\FF$ is a field of arbitrary characteristic. Fix an integer $c\ge2$, and set
\[
I\colonequals (uy-vx,\ vy-wx)+(u,v,w)^c.
\]
The ring $R/I$ has dimension $2$, and the elements $x,y$ form a system of parameters. Since
\[
(R/I)_x=\FF[x,x^{-1},y,u]/(u^c)\quad\text{ and }\quad (R/I)_y=\FF[x,y,y^{-1},w]/(w^c),
\]
one sees that $X\colonequals\Proj R/I$ is lci. We prove that for all integers $t\ge 1$, the asymptotic vanishing in this example takes the form $H^0(X_t,\ \calO_{X_t}(j))=0$ for $j\le -ct$, whereas
\[
H^0(X_t,\ \calO_{X_t}(-ct+1)) \neq 0.
\]
The argument is via local cohomology; the sequence~\eqref{equation:four:term} shows that for $j<0$, one has
\[
H^0(X_t,\ \calO_{X_t}(j))\ =\ {H^1_\frakm(R/I^t)}_j.
\]
We analyze $H^1_\frakm(R/I^t)$ using the \v Cech complex
\[
\CD
0 @>>> R/I^t @>>> {(R/I^t)}_x\oplus {(R/I^t)}_y @>>> {(R/I^t)}_{xy} @>>> 0,
\endCD
\]
and claim that
\begin{equation}
\label{equation:cocycle}
\left[\left(\frac{u}{x^2}\right)^{ct-1},\ \left(\frac{w}{y^2}\right)^{ct-1}\right]\ \in\ {(R/I^t)}_x\oplus {(R/I^t)}_y
\end{equation}
determines a nonzero element of ${H^1_\frakm(R/I^t)}_{-ct+1}$. To verify that the displayed element is indeed a \v Cech cocycle, it suffices to verify that
\[
(uy^2)^{ct-1}-(wx^2)^{ct-1}\ \in\ I^t.
\]
Since the ideal $I$ contains $uy^2-wx^2$ as well as ${(uy^2)}^c$, it suffices to check that
\[
(uy^2)^{ct-1}-(wx^2)^{ct-1}\ \in\ \Big(uy^2-wx^2,\ {(uy^2)}^c\Big)^t
\]
in the polynomial ring $\FF[x,y,u,v,w]$, and hence in its subring $\FF[uy^2,\, wx^2]$. Setting $a\colonequals uy^2$ and $b\colonequals wx^2$ for notational simplicity, it suffices to check that
\[
a^{ct-1}-b^{ct-1}\ \in\ \big(a-b,\ a^c\big)^t
\]
in the polynomial ring $\FF[a,b]$. Replacing $b$ by $a-b$, we need to show
\[
a^{ct-1}-(a-b)^{ct-1}\ \in\ \big(b,\ a^c\big)^t,
\]
which is evident by considering the binomial expansion of $(a-b)^{ct-1}$. This completes the argument that~\eqref{equation:cocycle} is indeed a \v Cech cocycle.

To verify that ${(u/x^2)}^{ct-1}$ is nonzero in ${(R/I^t)}_x$, note that its image under the surjection
\[
{(R/I^t)}_x \onto {\left({\frac{R}{(uy-vx,\ vy-wx)+(u,v,w)^{ct}}}\right)}_x=\FF[x,x^{-1},y,u]/(u^{ct})
\]
is nonzero. As it has negative degree, the element~\eqref{equation:cocycle} cannot be in the image of
\[
R/I^t \to {(R/I^t)}_x\oplus {(R/I^t)}_y,
\]
which completes the argument that
\[
H^0(X_t,\ \calO_{X_t}(-ct+1))\ =\ {H^1_\frakm(R/I^t)}_{-ct+1}\ \neq\ 0.
\]

Next, we examine the intersection of ${(R/I^t)}_x$ and ${(R/I^t)}_y$ in ${(R/I^t)}_{xy}$. For this, consider the $\ZZ^3$-grading with
\begin{alignat*}3
\deg u &= (2,0,-1), \qquad\qquad & \deg x &= (1,0,0),\\
\deg v &= (1,1,-1), \qquad\qquad & \deg y &= (0,1,0),\\
\deg w &= (0,2,-1).
\end{alignat*}
Each homogeneous element of ${(R/I^t)}_x$ has degree $(i,j,k)$ with $j\ge 0$ and~$k>-ct$, whereas, in ${(R/I^t)}_y$, each homogeneous element has degree $(i,j,k)$ with $i\ge 0$ and~$k>-ct$. Thus, a homogeneous element in the intersection must have degree $(i,j,k)$ satisfying~$i\ge 0$,~$j\ge 0$, and~$k>-ct$. But the $\ZZ^3$-grading specializes to the standard $\NN$-grading on $R$ under the map
\[
\ZZ^3 \to \ZZ\qquad\text{with}\qquad(i,j,k)\mapsto i+j+k,
\]
implying that each homogeneous element in the kernel of
\[
{(R/I^t)}_x\oplus {(R/I^t)}_y \to {(R/I^t)}_{xy}
\]
has degree greater than $-ct$. It follows that
\[
H^0(X_t,\ \calO_{X_t}(j))\ =\ {H^1_\frakm(R/I^t)}_{j}\ =\ 0 \qquad\text{for } j \le -ct.
\]
\end{example}

Theorem~\ref{theorem:asymptotic} may fail if $X$ is not lci:

\begin{example}
\label{example:non:lci}
Let $Z$ denote the Segre embedding of $\PP^1\times\PP^2$ in~$\PP^5$, over a field $\FF$ of characteristic zero, and set $X\subset\PP^6$ to be the projective cone over $Z$. Then $X$ has dimension $4$, and is Cohen-Macaulay though not lci. If $t\ge 2$, we claim that
\[
H^3(X_t,\ \calO_{X_t}(j)) \neq 0 \qquad\text{for each } j<0.
\]

By \cite[Example~5.1]{BBLSZ}, if $t\ge 2$, then $H^2(Z_t,\ \calO_{Z_t})\neq 0$, i.e., ${H^3_{\frakm_R}(R/I^t)}_0\neq 0$, where~$R/I$ is the homogeneous coordinate ring for $Z\subset\PP^5$. But then $X\subset\PP^6$ has homogeneous coordinate ring $S/IS$, where $S\colonequals R[y]$ with $y$ being a new indeterminate, so
\[
H^4_{\frakm_S}(S/I^tS)\ \cong\ H^3_{\frakm_R}(R/I^t)\otimes_\FF H^1_{(y)}(\FF[y])
\]
has a nonzero graded component in each negative degree, which proves the claim.
\end{example}

Lastly, Theorem~\ref{theorem:asymptotic} may fail if $X$ is lci but not equidimensional:

\begin{example}
\label{example:non:equidimensional}
Consider the polynomial ring $R\colonequals\FF[x,y,z]$, where $\FF$ is a field of arbitrary characteristic, and set $I\colonequals (xy,\, xz)$. Then $R/I$ has dimension $2$, and $X\colonequals\Proj R/I$ is smooth, hence lci. Fix $t\ge1$. The exact sequence
\[
\CD
0 @>>> R/I^t @>>> R/(x^t)\oplus R/(y,z)^t @>>> R/(x^t+(y,z)^t) @>>> 0
\endCD
\]
induces an isomorphism
\[
{H^1_\frakm(R/I^t)}_j\ \cong\ {H^1_\frakm(R/(y,z)^t)}_j \qquad\text{for } j<0
\]
which shows that $H^1_\frakm(R/I^t)$ has a nonzero graded component in each negative degree, so
\[
H^0(X_t,\ \calO_{X_t}(j))\ \neq\ 0 \qquad\text{for each } j<0.
\]
\end{example}

%%%%%%%%%%%%%%%%%%%%%%%%%%%%%%%%%%%%%%%%%%%%%%%%%%%%%%%%%%%%%%%%%%%%%%%%

\end{document}